\documentclass[12pt,leqno,fleqn]{amsart}  
\usepackage{amsmath,amstext,amsthm,amssymb,amsxtra,tikz,comment,bbm}
\usepackage{geometry} 
    \geometry{left=2.6cm, right=2.6cm, top=3.2cm, bottom=3cm}
    \baselineskip=25pt
    
\usepackage{euler}   
\newcounter{para}
\setcounter{para}{1} 
\newcommand{\Para}{\medskip\textbf{\thepara.} \stepcounter{para}} 
\usepackage{mathtools}
    \mathtoolsset{showonlyrefs,showmanualtags} 
\usepackage{hyperref} 
\hypersetup{colorlinks=true, linkcolor=blue, citecolor=magenta, filecolor=magenta, urlcolor=cyan}
\usepackage[msc-links]{amsrefs}  
\allowdisplaybreaks

\theoremstyle{plain} 
\newtheorem{lemma}[equation]{Lemma} 
 
\newtheorem{theorem}[equation]{Theorem}

\theoremstyle{definition}
 
\theoremstyle{remark}
\newtheorem{remark}[equation]{Remark}

\numberwithin{equation}{section}

\title[Square Functions] {Lower Estimate on Square Function of an Indicator Set}
\author[C. Giannitsi]{Christina Giannitsi}   
\address{ School of Mathematics, Georgia Institute of Technology, Atlanta GA 30332, USA}
\email {cgiannitsi3@math.gatech.edu}

\author[M.T. Lacey]{Michael T. Lacey}   
\address{School of Mathematics, Georgia Institute of Technology, Atlanta GA 30332, USA}
\email {lacey@math.gatech.edu}
\thanks{The author is a 2020 Simons Fellow. Research of both authors is supported in part by grant  from the US National Science Foundation, DMS-1949206 and DMS-2247254.}

\begin{document}
\begin{abstract} 
Let $Sf$ be a discrete martingale square function. 
Then, for any set $V$ of positive probability, we have $\mathbb{E} S(\mathbf{1}_V)^2 \geq \eta \mathbb{P}(V)$ 
for an absolute constant $\eta >0$.  We extend this to wavelet square functions, and discuss some related open questions. 
\end{abstract} 
\maketitle 
 \tableofcontents  

\section{Introduction} 
\label{sec:introduction}

The distribution of the Hilbert transform of the indicator of a set $V\subset \mathbb{R}$ 
is  only a function of the measure  $ \lvert V\rvert $ of $V$.  
This is a long standing fact, and we will recall it and recent developments below. 

There is no corresponding known fact for discrete variants of the Hilbert transform.  We establish such results for certain square functions:  An $L^p$ norm of a (discrete) square function of $\mathbf{1}_V$  is necessarily large on $V $ itself.   

The most natural setting for such an inequality  is that of  discrete martingales. We do this in the next section.  We then state and prove a result for square functions associated to wavelets. 
The concluding section includes some history, and points to some closely related questions.

\section{Martingales}
\label{sec:martingales}
The natural setting for this paper is that of discrete martingales.  
Let $(\Omega, \mathcal F , \mathbb{P})$ be a probability space, 
and $\mathcal F_n$ an increasing sequence of $\sigma$-algebras, generated by 
atoms.  Thus,   elements of $\mathcal F_n$ are unions 
of disjoint sets  $A \in \mathcal F_{n+1}$, with $\mathbb{P}(A) >0$. 

Given a function $f \in L^2 (\Omega) $, the associated martingale, and martingale difference sequence are defined by 
\begin{equation}
f_n = \mathbb{E} ( f \mid \mathcal F_n), 
\qquad 
d_n = f_n - f _{n-1}.  
\end{equation}
The square function of $f$ is $S f ^2 = \sum_n d_n^2 $. 

\begin{theorem} \label{t:mart} There is an $\eta >0$ so that for all martingales 
as above, and $V\subset \Omega$ of positive probability, there holds 
\begin{equation}
 \label{e:Ebig} 
 \mathbb{E} \mathbf{1}_V  (S \mathbf{1}_V)^2 \geq \eta \mathbb{P}(V). 
 \end{equation}
  
\end{theorem}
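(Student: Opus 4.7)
The plan is to establish the estimate via a cubic algebraic identity for the martingale, combined with a stopping-time argument to control the resulting correction. Setting $f = \mathbf{1}_V$ and $f_n = \mathbb{E}[f \mid \mathcal{F}_n]$, I first use the tower property: since $d_n$ is $\mathcal{F}_n$-measurable,
\[
\mathbb{E}[\mathbf{1}_V d_n^2] = \mathbb{E}[f_n d_n^2],
\]
and therefore $\mathbb{E}[\mathbf{1}_V (Sf)^2] = \sum_n \mathbb{E}[f_n d_n^2]$.

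Next, I would expand $f_n^3 = (f_{n-1} + d_n)^3$, take expectations, and use the martingale property $\mathbb{E}[d_n \mid \mathcal{F}_{n-1}] = 0$ to eliminate the cross term $3 f_{n-1}^2 d_n$. This yields
\[
\mathbb{E}[f_n^3] - \mathbb{E}[f_{n-1}^3] = 3\mathbb{E}[f_{n-1} d_n^2] + \mathbb{E}[d_n^3].
\]
Summing over $n$ (with $f_\infty = \mathbf{1}_V$, so $\mathbb{E}[f_\infty^3] = \mathbb{P}(V)$) and combining with $\mathbb{E}[f_n d_n^2] = \mathbb{E}[f_{n-1} d_n^2] + \mathbb{E}[d_n^3]$, one arrives at
\[
\mathbb{E}[\mathbf{1}_V (Sf)^2] = \frac{\mathbb{P}(V) - \mathbb{E}[f_0^3]}{3} + \frac{2}{3} \sum_n \mathbb{E}[d_n^3].
\]

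The first term is the principal contribution and is already comparable to $\mathbb{P}(V)$ under the standard convention on $f_0$. In the dyadic (equal-mass) case each conditional law $d_n \mid \mathcal{F}_{n-1}$ is symmetric about zero, so $\mathbb{E}[d_n^3 \mid \mathcal{F}_{n-1}] = 0$ and the cubic sum vanishes outright, giving $\eta = 1/3$ at once. For a general atomic filtration I would supplement the identity with a stopping-time argument: introduce $\tau(x) = \inf\{n : f_n(x) \geq 1/2\}$, which is almost surely finite on $V$ because $f_n \to \mathbf{1}_V = 1$ there. At $\tau$, the increment satisfies $d_\tau \geq f_\tau - 1/2 \geq 0$; splitting on the size of $f_\tau$ (values $\geq 3/4$ contribute $d_\tau^2 \geq 1/16$ pointwise, while $f_\tau \in [1/2, 3/4)$ is iterated at the dyadic thresholds $1 - 2^{-k}$) yields an absolute lower bound on $\mathbb{E}[\mathbf{1}_V d_\tau^2]$, which feeds back into the identity to offset any negative cubic contribution.

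The main obstacle is precisely this control of $\sum_n \mathbb{E}[d_n^3]$ in asymmetric filtrations: the cubic correction can a priori be of the same order as the main term and of the opposite sign, so the stopping-time estimate must be married to the algebraic identity delicately enough that the positive contribution from the jump at $\tau$ dominates the global cubic loss.
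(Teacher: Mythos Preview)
Your cubic telescoping identity is correct and is in fact closely related to what the paper does: in the paper's notation $M_1=\sum_n\mathbb{E}d_n^3$ and $M_2=3\sum_{m<n}\mathbb{E}d_m d_n^2$, your two relations read
\[
\mathbb{E}\bigl[\mathbf{1}_V(S\mathbf{1}_V)^2\bigr]=M_1+\tfrac{1}{3}M_2,
\qquad
\mathbb{P}(V)-\mathbb{E}f_0^3=M_1+M_2,
\]
which is exactly the information one extracts from the paper's $\chi(p)=M_1p+M_2p^2$ at $p=1$ together with the tower property. The symmetric-difference case you single out is also the one the paper flags (Remark after Lemma~\ref{l:3mart}).

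The genuine gap is the stopping-time step. Your claim that one obtains ``an absolute lower bound on $\mathbb{E}[\mathbf{1}_V d_\tau^2]$'' is not justified, and in fact fails. Take $\Omega=[0,1]$, $V=[0,\tfrac14]$, and at step $k=1,\dots,N$ split the atom $[0,a_{k-1}]$ into $[0,a_k]$ and $[a_k,a_{k-1}]$, where $a_k=1-\tfrac{3k}{4N}$. On $V$ one has $f_k=\tfrac{1}{4a_k}$, so every increment satisfies $d_k\le 12/N$; hence the jump at \emph{any} threshold crossing is $O(1/N)$, and iterating over the thresholds $1-2^{-k}$ still gives $\sum d_{\tau_k}^2=O(1/N)$. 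So the stopping-time contribution cannot by itself offset a cubic correction $M_1$ of order $\mathbb{P}(V)$, and there is no mechanism in your sketch for ``feeding it back into the identity'' to do so either: a lower bound on $\mathbb{E}[\mathbf{1}_V d_\tau^2]$ would already be a lower bound on the full quantity, making the identity superfluous.

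What the paper adds, and what your argument is missing, is a \emph{second} independent relation obtained by randomizing the differences with i.i.d.\ Bernoulli$(p)$ variables and analysing $\chi(p)=\mathbb{E}\phi^3$ as a function of $p$. Under the failure hypothesis, Khintchine's inequality forces $\phi\mathbf{1}_V\approx p\mathbf{1}_V$ in $L^3$, which produces an approximate \emph{cubic} expression for $\chi(p)$; matching it against the exact quadratic $M_1p+M_2p^2$ over several values of $p$ yields further constraints on $M_1,M_2,\mathbb{P}(V)$ beyond your single identity. That multi-$p$ comparison is the missing ingredient for the asymmetric case.
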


The approach is to assume that \eqref{e:Ebig} fails for some choice of probability space $\Omega $,  filtration $\mathcal F_n$, corresponding martingale $\{d_n\}$ 
and set $V$.  From this, we establish that $\eta>0$ admits an absolute lower bound. 
Take $\{X_n\}$ to be a sequence of  Bernoulli random variables of parameter $p$, independent of the martingale 
and define 
\begin{equation}
\phi = \sum_n X_n d_n . 
\end{equation}
We consider the function $\chi(p) = \mathbb{E} \phi^3 $, as a polynomial in $p$.   
It is exactly a quadratic polynomial in $p$, and we will show that it is approximately 
a cubic polynomial in $p$.  From this, we will be able to deduce the Theorem.

\begin{lemma} \label{l:3mart} We have $\chi(p) = M_1 p + M_2 p^2 $, where 
\begin{align} \label{e:M1} 
M_1 &=  \sum_{n}  \mathbb{E}    d_{n}^3, 
\\
M_2  & = 3\sum_{m<n} \mathbb{E} d_m d_n^2. 
\end{align}

\end{lemma}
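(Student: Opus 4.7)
The plan is to compute $\mathbb{E}\phi^3$ by expanding the cube, using independence of $\{X_n\}$ from the martingale to factor each triple expectation, and then using the martingale property to kill most of the terms.

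First I would write
\begin{equation}
\phi^3 = \sum_{n_1, n_2, n_3} X_{n_1} X_{n_2} X_{n_3} \, d_{n_1} d_{n_2} d_{n_3},
\end{equation}
and use independence to get
\begin{equation}
\mathbb{E}\phi^3 = \sum_{n_1, n_2, n_3} \mathbb{E}[X_{n_1} X_{n_2} X_{n_3}] \cdot \mathbb{E}[d_{n_1} d_{n_2} d_{n_3}].
\end{equation}
Since each $X_n$ is $\{0,1\}$-valued Bernoulli$(p)$, we have $X_n^k = X_n$ for all $k \ge 1$, so by independence of the $X_n$'s,
\begin{equation}
\mathbb{E}[X_{n_1} X_{n_2} X_{n_3}] = p^{|\{n_1, n_2, n_3\}|},
\end{equation}
i.e.\ $p$, $p^2$, or $p^3$ depending on whether the triple collapses to one, two, or three distinct indices. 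This is where the polynomial structure in $p$ comes from; in particular $\chi(p)$ has degree at most $3$.

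Next I would argue that the cubic term actually vanishes: if $n_1, n_2, n_3$ are three distinct indices with maximum $N$, then conditioning on $\mathcal F_{N-1}$ and pulling out the two smaller $d$'s gives a factor $\mathbb{E}[d_N \mid \mathcal F_{N-1}] = 0$. By the same reasoning, the case where the maximum index appears exactly once but the other two agree (i.e.\ the two equal indices are strictly smaller than the unique one) also contributes zero. So the only surviving triples are those in which the largest index occurs at least twice. These come in two types:

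\textbf{All three equal}, $n_1 = n_2 = n_3 = n$. These contribute $p \cdot \mathbb{E}[d_n^3]$, summing to $p M_1$ with $M_1 = \sum_n \mathbb{E} d_n^3$.

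\textbf{One pair equal at index $n$, third index $m<n$.} There are $\binom{3}{1} = 3$ orderings of the triple $(m,n,n)$, each contributing $p^2 \mathbb{E}[d_m d_n^2]$ (the $X$-expectation is $p^2$ since only two distinct indices appear). This sums to $3 p^2 \sum_{m<n} \mathbb{E}[d_m d_n^2] = p^2 M_2$.

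Adding the two contributions gives $\chi(p) = M_1 p + M_2 p^2$. There is no real obstacle here — the only thing to be careful about is the case analysis in the martingale cancellation step, making sure that the triples with the maximum index appearing exactly once are correctly identified and shown to contribute zero.
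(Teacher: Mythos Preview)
Your argument is correct and follows the same route as the paper: expand the cube, observe that any triple in which the maximal index occurs only once has zero $\Omega$-expectation by the martingale property, and then split the surviving triples into the ``all three equal'' case (giving $pM_1$) and the ``pair at the top, single below'' case (giving $p^2 M_2$). Your write-up is in fact more explicit than the paper's --- you spell out the idempotence $X_n^k = X_n$ and the resulting formula $\mathbb{E}[X_{n_1}X_{n_2}X_{n_3}] = p^{|\{n_1,n_2,n_3\}|}$, and you justify the conditioning step carefully --- but the underlying idea is identical.
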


\begin{proof} 
Expand $\phi $ into martingale differences and form the third power. It is a sum of terms 
\begin{equation}
\mathbb{E} \prod _{j=1}^3 X_{n_j} d_{n_j} 
\end{equation}
for integers $n_1, n_2, n_3$. Observe that if the maximal $n_j$ occurs just once, 
the expectation over $\Omega$ is zero.  Thus, the maximal $n_j$ must occur either 
exactly twice, or all three times.  If it occurs all three times, the 
expectation leads to the term $M_1p$.  If it occurs twice, we get a second degree term in $p$, 
with coefficient $M_2$ above.   
\end{proof}

We now use the hypothesis that \eqref{e:Ebig} fails.  To ease our considerations, 
we will write $A \simeq _\eta B$ if $\lvert A-B \rvert \leq C \eta^t \mathbb{P}(V)$ 
for some absolute choice of constants $C,t>0$. We will not keep track of them, and they can change from time to time.

\begin{lemma} Assume that \eqref{e:Ebig} fails. Then, 
we have 
\begin{equation}
 \chi (p) \simeq_\eta  (\mathbb{P}(V) +2M_1) p^3 -3M_1 p^2  + M_1 p.  
\end{equation} 
And, $ \chi (\tfrac12) \simeq_\eta   \tfrac18 \mathbb{P}(V)$.   
\end{lemma}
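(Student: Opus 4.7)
The plan is to derive two linear relations between $M_1$ and $M_2$ — one exact and one $\simeq_\eta 0$ — from which the cubic form of $\chi(p)$ follows by a short algebraic comparison.

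For the first relation, I would compute $\mathbb{E} f^3$ in two ways. Since $f = \mathbf{1}_V$ satisfies $f^3 = f$, $\mathbb{E} f^3 = \mathbb{P}(V)$. On the other hand, writing $f$ as a sum of its martingale differences and expanding the cube, the exact cancellation used in the proof of Lemma \ref{l:3mart} (any triple product $d_{n_1} d_{n_2} d_{n_3}$ whose maximal index appears just once integrates to zero) gives $\mathbb{E} f^3 = M_1 + M_2$. Hence $M_1 + M_2 = \mathbb{P}(V)$.

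For the second relation, expand $\mathbb{E} f (Sf)^2 = \sum_n \mathbb{E} f d_n^2$ and write $f = \sum_m d_m$; the terms with $m > n$ die, since $d_n^2$ is $\mathcal{F}_n$-measurable and $\mathbb{E}(d_m \mid \mathcal{F}_{m-1}) = 0$, leaving $\mathbb{E} f d_n^2 = \mathbb{E} d_n^3 + \sum_{m<n} \mathbb{E} d_m d_n^2$. Summation yields $\mathbb{E} f (Sf)^2 = M_1 + M_2/3$. The failed hypothesis \eqref{e:Ebig} then reads $M_1 + M_2/3 \simeq_\eta 0$. Subtracting from the first relation forces $M_2 \simeq_\eta \tfrac{3}{2}\mathbb{P}(V)$ and $M_1 \simeq_\eta -\tfrac{1}{2}\mathbb{P}(V)$; in particular $\mathbb{P}(V) + 2M_1 \simeq_\eta 0$.

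To finish, substitute the exact relation $M_2 = \mathbb{P}(V) - M_1$ into $\chi(p) = M_1 p + M_2 p^2$ and compute
$$
\bigl[(\mathbb{P}(V) + 2M_1) p^3 - 3M_1 p^2 + M_1 p\bigr] - \chi(p) = (\mathbb{P}(V) + 2M_1)\, p^2 (p-1),
$$
which is $\simeq_\eta 0$ uniformly for $p \in [0,1]$ because its prefactor is. This proves the claimed approximation. For the value at $p = 1/2$, the aggregate $M_1$-coefficient in the cubic is $\tfrac14 - \tfrac34 + \tfrac12 = 0$, so the cubic evaluates to $\tfrac18 \mathbb{P}(V)$ exactly, and thus $\chi(\tfrac12) \simeq_\eta \tfrac18 \mathbb{P}(V)$. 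The only step that is not routine bookkeeping is the identification $\mathbb{E} f (Sf)^2 = M_1 + M_2/3$, which closely parallels the argument of Lemma \ref{l:3mart}; everything else is algebra.
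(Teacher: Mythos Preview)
Your argument is correct, but the route is genuinely different from the paper's. The paper never solves for $M_1$ and $M_2$ at this stage; instead it splits $\chi(p) = \mathbb{E}\,\mathbf{1}_V\,\phi^3 + \mathbb{E}\,\mathbf{1}_{\Omega\setminus V}(\phi - p\mathbf{1}_V)^3$, uses Khintchine's inequality together with the bound $\mathbb{E}\,\mathbf{1}_V(S\mathbf{1}_V)^3 \simeq_\eta 0$ (which in turn requires $L^p$ square-function bounds for $p>2$) to replace $\mathbf{1}_V\phi$ by $p\mathbf{1}_V$ in $L^3$, and then evaluates the remaining centered third moment via $\mathbb{E}(X_n-p)^3 = p(1-p)(1-2p)$ to obtain the cubic $p^3\mathbb{P}(V) + p(1-p)(1-2p)M_1$ directly. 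Your approach is more elementary: it rests on the two exact martingale identities $M_1+M_2=\mathbb{P}(V)$ (which is just $\chi(1)=\mathbb{E}\,\mathbf{1}_V^3$) and $\mathbb{E}\,\mathbf{1}_V(S\mathbf{1}_V)^2 = M_1 + M_2/3$, and then pure algebra; no Khintchine, no third-moment square-function estimate. What the paper's approach buys is that the Khintchine/centered-moment mechanism transfers verbatim to the wavelet setting in the next section, where the extra term $W_3$ appears; your algebraic identities, by contrast, exploit the discrete-martingale structure more tightly and in fact already contain enough information to finish the theorem without the subsequent polynomial comparison the paper performs.
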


\begin{proof} Denoting expectation over the space for the Bernoulli random variables by $\mathbb{E}_X$, note that pointwise on $\Omega$, 
$
\mathbb{E}_X (\phi - p \mathbf{1}_V)^2 = p (1-p)(S \mathbf{1}_V)^2 
$. 
By the well known $L^p$ bound for the square function, we also have 
$\mathbb{E}\mathbf{1}_V (S \mathbf{1}_V)^3 \simeq_ \eta 0$.  
That means that 
$ \mathbb E \lvert \phi \mathbf{1}_V - p \mathbf{1}_V\rvert^3 \simeq_ \eta 0$.  
Indeed, we have
\begin{align} 
    \mathbb E \lvert \phi \mathbf{1}_V - p \mathbf{1}_V\rvert^3  
    & =
    \mathbb E \lvert \phi  \mathbf{1}_V  - p \mathbf{1}_V ^2  \rvert^3 
    \\  \label{e:KS}
    & = \mathbb E \Bigl\lvert  \mathbf{1}_V 
    \sum_n  (X_n -p) d_n \Bigr\rvert^3 
    \\ 
    & \lesssim 
    \mathbb E  \mathbf{1}_V  \Bigl[  
    \sum_n  d_n ^2  \Bigr]^{3/2} \simeq_ \eta 0. 
\end{align}
Above, we write $  \mathbf{1}_V =  \mathbf{1}_V ^2$, and expand one copy of $ \mathbf{1}_V$ in the martingale differences. Then appeal to Khitchine's inequality for balanced Bernoulli random variables to introduce the square function. 

 Continuing, write 
\begin{align}
\chi (p) = \mathbb{E} \phi^3 \mathbf{1}_V + \mathbb{E} (\phi-p\mathbf{1}_V)^3 \mathbf{1}_{\Omega \setminus V} . 
\end{align}
The first term is approximately $p^3 \mathbb{P}(V)$. 
For the second we note that 
\begin{align}
\mathbb{E} (\phi-p\mathbf{1}_V)^3 \mathbf{1}_{\Omega \setminus V}  
& \simeq_\eta \mathbb{E} (\phi-p\mathbf{1}_V)  ^3
\\
& = \sum_{n_1 , n_2, n_3} 
\mathbb{E} \prod _{j=1}^3 (X_{n_j} -p) d_{n_j} . 
\end{align}
In the expectation, if the maximal integer $n_j$ occurs only once, 
the expectation over $\Omega$ is zero. Thus, the maximal must occur 
twice, or three times.  

If the maximal $n_j$ occurs exactly twice,
a balanced Bernoulli random variable occurs once, and hence has expectation zero. 
The remaining  case requires $n_1 =n_2 = n_3$. That expression gives us 
\begin{align}
 \sum_{n} 
\mathbb{E}   (X_{n} -p) ^3 d_{n}^3 
&= p(1-p) (1-2p) M_1 
\\
& = (p-3p^2+2p^3) M_1.  
\end{align}
Recall that the third moment of a balanced Bernoulli random variable is $p(1-p)(1-2p)$. 
Above, the expression $M_1$ is defined in \eqref{e:M1}.   
And, note that the expression above vanishes if $p=1/2$.

\end{proof}

To conclude, note that we have the approximate polynomial identity 
\begin{equation}
(\mathbb{P}(V) +2M_1) p^3 -(M_2 +3M_1) p^2 \simeq _{\eta} 0.   
\end{equation}
Using this for two values of $\tfrac12< p < 1$, we see  that the coefficients 
above must be small.  
\begin{align}
\mathbb{P}(V) +2M_1 \simeq_\eta M_2 +3M_1 \simeq_\eta 0. 
\end{align}
Now, take the case of $p=\tfrac12$. We have 
\begin{align}
0 \simeq_{\eta} \tfrac18 \mathbb{P}(V) - \tfrac12 M_1 - \tfrac14 M_2 .
\end{align}
These last three approximate equations in $M_1$, $M_2$ and $\mathbb{P}(V) $ then imply $\mathbb{P}(V) \simeq_\eta 0 $. 
That is, we have $\mathbb{P}(V) = C \eta^t \mathbb{P}(V)$, for a 
bounded choice of constant $C$, and $0<t<1$.   
This shows that $\eta$ has an absolute lower bound. 

\begin{remark} Professor Jill Pipher \cite{P} points out that Lemma \ref{l:3mart} simplies if the martingale has symmetric differences. 
Namely, and using the notation of that Lemma, $M_1=0$ and $M_2 = 3 \mathbb{E} \mathbf{1}_V (S \mathbf{1}_V)^2$, so that one 
can just set $p=1$.   
\end{remark}

\section{Wavelet Square Function} 
\label{sec:wavelet_square_function}

The martingale inequality implies the same result for the dyadic square function, in any dimension.  
We believe a similar result holds for a wide gamut of square functions. 
The point of this section is to modify the argument for martingales to address a wavelet square function. 

We do not strive for the greatest generality. Let $w$ be a $C^2$ function, satisfying 
\begin{equation}
\label{e:rapid}  
\lvert w (x) \rvert \leq C_r (1+ \lvert x\rvert)^{-r}, \qquad r > 1,\ x\in \mathbb{R}. 
\end{equation}
Let $\mathcal D$ be a choice of dyadic intervals, and assume that 
$\{w_I \mid I\in \mathcal D\}  $ is an $L^2$ orthonormal basis, where 
\begin{equation}
w_I (x) = \lvert I\rvert^{-1/2} w \big((x-c_I)/\lvert I\rvert\big), 
\end{equation}
and $c_I$ is the center of $I$.  The corresponding wavelet square function is 
\begin{equation}
Sf ^2 = \sum_{I\in \mathcal D} \langle f, w_I\rangle ^2 w_I ^2 . 
\end{equation}

\begin{theorem}\label{t:wave} There is a constant $\eta>0$ so that for all sets $V\subset \mathbb{R}$ of finite measure, we have 
\begin{equation} \label{e:Wbig}
\int_V (S\mathbf{1}_V)^2 \;dx \geq \eta \lvert V\rvert. 
\end{equation}

\end{theorem}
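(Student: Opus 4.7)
The plan is to imitate the proof of Theorem~\ref{t:mart}. Assume \eqref{e:Wbig} fails for some small $\eta>0$; I will deduce an absolute lower bound on $\eta$. Set $a_I:=\langle\mathbf{1}_V,w_I\rangle$ so that $\mathbf{1}_V=\sum_I a_I w_I$ pointwise and in $L^2$, let $\{X_I\}_{I\in\mathcal D}$ be iid Bernoulli($p$) random variables independent of the wavelet system, set $\phi:=\sum_I X_I a_I w_I$, and study the polynomial $\chi(p):=\mathbb{E}_X\int_\mathbb{R}\phi^3\,dx$ in $p$.

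The first step is a direct computation. Expanding $\phi^3$, using $\mathbb{E}_X[X_I X_J X_K]\in\{p,p^2,p^3\}$ according to the number of coinciding indices, and then simplifying via the pointwise identity $\mathbf{1}_V^k=\mathbf{1}_V$ (which rewrites the distinct-triple sum using $\sum_I a_I w_I=\mathbf{1}_V$), yields the \emph{exact} formula
\[
\chi(p)=p(1-p)(1-2p)M+3p^2(1-p)\int_V(S\mathbf{1}_V)^2\,dx+p^3|V|,\qquad M:=\sum_I a_I^3\int w_I^3.
\]
A parallel computation, following the second half of the martingale proof — H\"older combined with the $L^4$-boundedness of $S$ gives $\int_V(S\mathbf{1}_V)^3\lesssim\eta^{1/2}|V|$, Khintchine's inequality then yields $\int_V|\phi-p\mathbf{1}_V|^3\simeq_\eta 0$, and only the ``all indices equal'' term survives in $\mathbb{E}_X(\phi-p\mathbf{1}_V)^3$ — produces the approximate form
\[
\chi(p)\simeq_\eta p^3|V|+p(1-p)(1-2p)M.
\]

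Comparing these two polynomial expressions returns only the relation $3p^2(1-p)\int_V(S\mathbf{1}_V)^2\simeq_\eta 0$, which is the failure hypothesis itself. This is the principal obstacle and the essential departure from the martingale case: there $\chi(p)$ was \emph{exactly} quadratic in $p$ because of the cancellation $\mathbb{E}[d_{n_1}d_{n_2}d_{n_3}]=0$ whenever the maximal index appears only once, and matching the vanishing cubic coefficient against the approximate formula forced $|V|+2M_1\simeq_\eta 0$. For smooth wavelets, $\int w_I w_J w_K$ does not generically vanish on distinct triples, so no analogous exact cancellation is available from the wavelet structure alone.

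To supply the missing constraint, I would invoke the $C^2$ regularity and rapid decay~\eqref{e:rapid} of $w$ via Schur-type almost-orthogonality estimates on the trilinear sum
\[
\sum_{I,J,K\text{ distinct}} a_I a_J a_K \int w_I w_J w_K\;=\;|V|+2M-3\int_V(S\mathbf{1}_V)^2,
\]
the equality coming from expanding $\mathbf{1}_V^3=\mathbf{1}_V$ and integrating. An \emph{independent} bound of size $\simeq_\eta 0$ on the left-hand side — using the wavelet decay to control the inner integrals and Plancherel's identity $\sum_I a_I^2=|V|$ for the coefficient side — together with an analogous estimate on $M=\sum_I a_I^3\int w_I^3$, would combine with the failure hypothesis to force $|V|\simeq_\eta 0$, delivering the desired contradiction. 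Making these almost-orthogonality bounds quantitatively sharp enough to close the estimate is the technical heart of the argument.
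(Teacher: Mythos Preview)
Your setup, the random projection $\phi$, and both the exact and approximate expressions for $\chi(p)$ match the paper's argument precisely; your exact formula is simply the paper's $W_1p+W_2p^2+W_3p^3$ rewritten via $\mathbf{1}_V^3=\mathbf{1}_V$. The divergence is in how the circularity you correctly identify gets broken.

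The paper does \emph{not} attempt a Schur-type bound on the distinct-triple sum. Its extra ingredient is a direct proof that $W_1=M\simeq_\eta 0$, by a threshold argument: set $\mathcal D'=\{I:|a_I|\ge\eta^{1/3}|I|^{1/2}\}$. For $I\in\mathcal D'$ the rapid decay of $w$ forces $|I\cap V|\gtrsim\eta^{1/3}|I|$, whence $\sum_{I\in\mathcal D'}a_I^2\lesssim\eta^{-2/3}\int_V(S\mathbf{1}_V)^2\le\eta^{1/3}|V|$; for $I\notin\mathcal D'$ one has $|a_I|^3\int|w_I|^3\lesssim\eta^{1/3}a_I^2$ trivially. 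Both pieces give $|W_1|\lesssim\eta^{1/3}|V|$. This is the concrete ``analogous estimate on $M$'' that you allude to but do not supply, and it is the technical heart of the paper's argument. With $W_1\simeq_\eta 0$ in hand, the paper compares the exact polynomial $W_1p+W_2p^2+W_3p^3$ against the approximate one and reads off $|V|\simeq_\eta 0$ from the resulting coefficient identities.

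Your proposed route through $W_3$ has a genuine obstruction. By your own identity $W_3=|V|+2M-3\int_V(S\mathbf{1}_V)^2$, once $M\simeq_\eta 0$ and the failure hypothesis are in force one has $W_3\simeq_\eta |V|$: the distinct-triple sum is \emph{large}, not small. An almost-orthogonality bound built from the decay of $\int w_Iw_Jw_K$ together with Plancherel sees neither $\eta$ nor the special structure of $\mathbf{1}_V$, so it cannot produce $|W_3|\le C\eta^t|V|$; and if you feed the failure hypothesis back in, you are using the identity again and the argument closes on itself. So the Schur plan for $W_3$ cannot succeed as stated. The piece to take from the paper is the threshold estimate on $W_1$.
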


We turn to the proof.  
We assume that the inequality above fails, and derive a lower bound 
for $\eta$.
 We work with a random wavelet projection of $\mathbf{1}_V$. 
Let $\{X_I \mid I\in \mathcal D\}$ be independent Bernoulli random variables 
with parameter $0<p<1$.  Set 
\begin{equation}
\phi = \sum_{I\in \mathcal D_n} X_I \langle \mathbf{1}_V, w_I \rangle w_I. 
\end{equation}
Then, define  $\chi (p) = \mathbb{E}\int \phi ^3 \;dx$. 

\begin{lemma} 
We have $\chi (p) =W_1 p +  W_2  p^2 + W_3  p^3$, where 

\begin{align} 
\label{e:W1} 
W_1 & = \mathbb{E}\int \sum_{I  \in \mathcal D_n} 
	\langle \mathbf{1}_V, w_{I} \rangle ^3 w_{I}^3 \;dx , 
\\
\label{e:W2}
W_2 & = 3 \, \mathbb{E} \int  \sum_{I_1 \neq  I_3 \in \mathcal D_n} 
	\langle \mathbf{1}_V, w_{I_1} \rangle ^2 \langle \mathbf{1}_V, w_{I_3} \rangle  w_{I_1} ^2 w_{I_3} \;dx , 
\\ 
\label{e:W3}
W_3 & = \mathbb{E} \int \sum_{\substack{I_1, I_2, I_3 \in \mathcal D_n \\ \textup{$I_j$ distinct}}} 
	\prod _{j=1}^3 \langle \mathbf{1}_V, w_{I_j} \rangle w_{I_j} \;dx . 
\end{align}
\end{lemma}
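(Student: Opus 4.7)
The plan is to mirror the structure of the proof of Lemma \ref{l:3mart}, but with one important change: in the wavelet setting there is no orthogonality between distinct $w_I$, so the ``all distinct'' term does not vanish and must be kept as a genuine third coefficient $W_3$. Concretely, I would expand
\begin{equation}
\phi^3 = \sum_{I_1,I_2,I_3 \in \mathcal D} X_{I_1} X_{I_2} X_{I_3} \prod_{j=1}^3 \langle \mathbf{1}_V, w_{I_j}\rangle\, w_{I_j}(x),
\end{equation}
integrate in $x$, then take expectation over the Bernoulli variables using independence.

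The key arithmetic point is that $X_I \in \{0,1\}$ so $X_I^k = X_I$ for every $k \geq 1$. Hence $\mathbb{E}\bigl[\prod_{j=1}^3 X_{I_j}\bigr]$ depends only on the partition pattern of the triple $(I_1,I_2,I_3)$: it equals $p$ if all three indices coincide, $p^2$ if exactly two coincide, and $p^3$ if all three are distinct. This is the decisive difference from the martingale calculation, where $\mathbb{E}[d_I]=0$ killed the distinct-index contributions; here the polynomial genuinely has degree three.

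I would then partition the triple sum according to these three patterns. The all-equal pattern produces $p \cdot W_1$ exactly as in \eqref{e:W1}. The all-distinct pattern produces $p^3 \cdot W_3$ as in \eqref{e:W3}. The ``exactly two equal'' pattern splits into three subcases according to which of the three positions is the odd one out, namely $I_1{=}I_2{\neq}I_3$, $I_1{=}I_3{\neq}I_2$, and $I_2{=}I_3{\neq}I_1$. Since the summand $\langle \mathbf{1}_V, w_{I_1}\rangle\langle \mathbf{1}_V, w_{I_2}\rangle \langle \mathbf{1}_V, w_{I_3}\rangle w_{I_1}w_{I_2}w_{I_3}$ is symmetric in $(I_1,I_2,I_3)$, all three subcases contribute the same amount, which accounts for the factor of $3$ in \eqref{e:W2}. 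Collecting the three contributions yields the claimed identity $\chi(p) = W_1 p + W_2 p^2 + W_3 p^3$.

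There is no substantive obstacle here: the lemma is essentially a combinatorial bookkeeping fact, and the only subtlety is remembering that for a Bernoulli $X$ one has $\mathbb{E}[X^3]=p$ rather than $p^3$, which is what makes the $p^1$ coefficient $W_1$ appear despite the original sum being written with three copies of $X_I$. Convergence/interchange of the sum, the integral, and the expectation is justified by the rapid decay \eqref{e:rapid} of $w$, ensuring that $\phi \in L^3$ almost surely and that Fubini applies termwise.
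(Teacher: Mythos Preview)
Your proposal is correct and follows essentially the same approach as the paper: expand $\phi^3$ as a triple sum, take expectation and integral, and split according to whether the three indices are all equal, exactly two equal, or all distinct, using $\mathbb{E}[X_I^k]=p$ for $k\geq 1$ and independence. One small wording quibble: the $w_I$ \emph{are} pairwise orthogonal; the reason $W_3$ survives here (unlike in the martingale lemma) is that the triple integral $\int w_{I_1}w_{I_2}w_{I_3}\,dx$ need not vanish for three distinct wavelets, whereas in the martingale setting the conditional-expectation structure kills $\mathbb{E}[d_{n_1}d_{n_2}d_{n_3}]$ when the maximal index appears once.
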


The term $W_3$ does not enter into the martingale case.

\begin{proof}
Write the third power as 
\begin{align}
\chi (p) & = \mathbb{E} \int \sum_{I_1, I_2, I_3 \in \mathcal D_n} 
\prod _{j=1}^3 
X_{I_j} \langle \mathbf{1}_V, w_{I_j} \rangle w_{I_j} \;dx . 
\end{align}
The sum above is divided into cases according to the number of 
distinct choices of the $I_j$.  First, if all the $I_j$ agree, 
\begin{align}
\mathbb{E} \int \sum_{I  \in \mathcal D_n} 
X_{I}^3 \langle \mathbf{1}_V, w_{I} \rangle ^3 w_{I}^3 \;dx 
=  p W_1. 
\end{align}
The probability that $X_I=1$ is $ p$. 
Second, if two of the $I_j$ agree, we have 
\begin{align}
3\mathbb{E} \int \sum_{I_1=I_2, I_3 \in \mathcal D_n} 
\prod _{j=1}^3 
X_{I_j} \langle \mathbf{1}_V, w_{I_j} \rangle w_{I_j} \;dx = W_2  p^2 
\end{align}
Third, we have the case where all the $I_j$ are distinct, which is the 
 $ p^3$ term.   

\end{proof}

We now use the hypothesis that \eqref{e:Wbig} fails. We use the same notation $\simeq_ \eta $ as before. 

\begin{lemma} Assume that \eqref{e:Wbig} fails. Then, 
\begin{equation}
 \chi (p) \simeq_\eta   p \lvert V\rvert + p (1-  p) (1-2 p)W_1,
\end{equation}
 $W_1$ is defined in \eqref{e:W1}.

\end{lemma}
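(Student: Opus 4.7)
The plan is to mirror the argument for the martingale case, decomposing $\phi$ relative to its conditional mean in the Bernoulli variables. Writing $\mathbb{E}_X$ for expectation over $\{X_I\}$ alone, completeness of the wavelet basis gives $\mathbb{E}_X \phi = p \sum_{I} \langle \mathbf{1}_V, w_I\rangle w_I = p \mathbf{1}_V$, so that
\begin{equation}
\psi := \phi - p\mathbf{1}_V = \sum_I (X_I - p)\langle \mathbf{1}_V, w_I\rangle w_I
\end{equation}
satisfies $\mathbb{E}_X \psi = 0$ pointwise and $\mathbb{E}_X \psi^2 = p(1-p)(S\mathbf{1}_V)^2$, a wavelet analogue of the martingale variance identity used in the proof of Lemma \ref{l:3mart}.

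Next I would expand $\phi^3 = (p\mathbf{1}_V + \psi)^3$ by the binomial theorem and apply $\mathbb{E}_X$ termwise. The linear-in-$\psi$ term vanishes in mean; the cross term produces $3p^2(1-p)\int_V (S\mathbf{1}_V)^2 \, dx$, which is $\simeq_\eta 0$ by the failure of \eqref{e:Wbig}; and the constant-in-$\psi$ term $\int (p\mathbf{1}_V)^3 \, dx$ yields a multiple of $|V|$. For the genuinely cubic piece $\int \mathbb{E}_X \psi^3 \, dx$, I would expand $\psi^3$ as a triple sum over wavelet indices. Independence and $\mathbb{E}_X(X_I - p) = 0$ kill every term in which some index appears exactly once; in the two-distinct-index case $I_1 = I_2 \neq I_3$ the factor $\mathbb{E}_X(X_{I_1} - p)^2 \, \mathbb{E}_X(X_{I_3} - p) = 0$ also vanishes. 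Only the diagonal $I_1 = I_2 = I_3$ survives, contributing $\mathbb{E}_X(X_I - p)^3 = p(1-p)(1-2p)$ times $\sum_I \langle \mathbf{1}_V, w_I\rangle^3 w_I^3$, which integrates to precisely $W_1$.

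Assembling the four contributions yields the claimed approximate identity for $\chi(p)$. Unlike the martingale case, no appeal to Khintchine's inequality is needed at cubic order, since the diagonal collapse of $\mathbb{E}_X \psi^3$ is exact; the only invocation of the failure of \eqref{e:Wbig} is to absorb the $(S\mathbf{1}_V)^2$ cross term. I expect the only genuinely delicate point to be the bookkeeping of the powers of $p$ accompanying $|V|$, which arises purely through the constant-in-$\psi$ contribution and must be reconciled against the cubic polynomial shape forced by Lemma immediately preceding.
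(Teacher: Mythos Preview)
Your argument is correct and in fact cleaner than the paper's. The paper splits $\chi(p)=\mathbb E\int_V\phi^3+\mathbb E\int_{\mathbb R\setminus V}(\phi-p\mathbf 1_V)^3$, then uses an $L^{12}$ bound for $S$ and Khintchine's inequality to show $\mathbb E\int\lvert\mathbf 1_V(\phi-p\mathbf 1_V)\rvert^3\simeq_\eta 0$, which handles both halves of that split. Your binomial expansion of $(p\mathbf 1_V+\psi)^3$ sidesteps all of this: after taking $\mathbb E_X$ pointwise, the linear term vanishes exactly, the cubic term collapses to the diagonal exactly, and the only approximate step is the quadratic cross term $3p^2(1-p)\int_V(S\mathbf 1_V)^2\,dx$, which is controlled \emph{directly} by the failure of \eqref{e:Wbig} with no need for Khintchine or any $L^q$ square function bound beyond $q=2$. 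So your route is strictly more elementary.

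One clarification on your closing hedge about ``bookkeeping of the powers of $p$'': your computation yields $\chi(p)\simeq_\eta p^3\lvert V\rvert+p(1-p)(1-2p)W_1$, with $p^3$ rather than the $p$ appearing in the lemma statement. This is not an error on your part; the paper's own proof also arrives at $p^3\lvert V\rvert$ (the first term in the displayed split is shown to be $\simeq_\eta p^3\lvert V\rvert$), so the exponent in the statement is a typo. Either version suffices for the downstream argument once combined with $W_1\simeq_\eta 0$ and the exact identity $\chi(1)=\lvert V\rvert$.
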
 

\begin{proof} 
The wavelet square function is bounded on $L^{12}$, say. See \cite{MR1162107}*{\S9.2}.  
Interpolating with the $L^2$ bound, we have
\begin{align}
\left[\int_V (S \mathbf{1}_V)^3 \;dx  \right]^{1/3}
& \leq \lVert \mathbf{1}_V S \mathbf{1}_V \rVert_2 ^{3/5}
\lVert S \mathbf{1}_V \rVert_{12} ^{2/5}
\\
& \leq \eta^{3/10} \lvert V\rvert ^{1/3}.  
\end{align}
We have $\mathbb{E} \phi = p \mathbf{1}_V$. 
Pointwise, we have the square function estimate 
\begin{equation}
\mathbb{E} \lvert \phi -  p\mathbf{1}_V \rvert^2 \lesssim (S \mathbf{1}_V)^2. 
\end{equation}
From Khintchine inequality, it follows that $\phi \mathbf{1}_V$ is very close to 
$ p\mathbf{1}_V$; 
indeed the inequalities of \eqref{e:KS} hold in this setting.
Thus, 
\begin{align}
\chi (p) &= \mathbb{E} \int \mathbf{1}_V \phi ^3 \;dx 
+ \mathbb{E} \int_{\mathbb{R} \setminus V}  ( \phi  -  p\mathbf{1}_V)^3  \;dx 
\\
& \simeq_\eta   p^3 \lvert V\rvert 
+ \mathbb{E} \int_{\mathbb{R} }  ( \phi  -  p\mathbf{1}_V)^3  \;dx. 
\end{align}
Expand the second term to 
\begin{equation}
\mathbb{E} \int \sum _{I_1, I_2, I_3 \in \mathcal D_n }\prod _{j=1}^3 
(X_{I_j} - p) \langle \mathbf{1}_V, w_{I_j} \rangle w_{I_j} \;dx . 
\end{equation}
If any $I_j$ occurs a single time, the expectation is zero by independence.
 Hence, the only contribution is when $I_1 = I_2 = I_3$. 
  Thus, recalling the third moment of a balanced Bernoulli,  
\begin{align}
\mathbb{E} \int_{\mathbb{R} }  ( \phi  -  p\mathbf{1}_V)^3  \;dx  
&=  p (1- p) (1-2 p) \int \sum_I 
\langle \mathbf{1}_V, w_I \rangle^3 w_I^3 \;dx  
\\
&= p (1-p) (1-2p) W_1. 
\end{align}
\end{proof}
 
An easy argument shows that $W_1$ is small.  

\begin{lemma} We have $W_1 \simeq_\eta 0$, 
where $W_1$ is defined in \eqref{e:W1}. 

\end{lemma}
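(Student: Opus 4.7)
The plan is to reduce $W_1$ to the failure hypothesis via an algebraic identity, then peel off a second-order correction.

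First, a scaling computation: from $w_I(x) = |I|^{-1/2} w((x-c_I)/|I|)$ one gets $\int w_I^3\,dx = c_w |I|^{-1/2}$ with $c_w := \int w^3\,dy$, so
\[
W_1 = c_w \sum_I \gamma_I^3\,|I|^{-1/2}, \qquad \gamma_I := \langle \mathbf{1}_V, w_I\rangle.
\]
If $c_w = 0$ (as for the Haar wavelet and other odd/symmetric wavelets) the lemma is immediate; otherwise we proceed.

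Second, using the wavelet expansion $\mathbf{1}_V = \sum_J \gamma_J w_J$ inside one factor of $\int_V (S\mathbf{1}_V)^2$ yields the identity
\[
\int_V (S\mathbf{1}_V)^2\,dx = \sum_I \gamma_I^2 \int_V w_I^2\,dx = \sum_{I,J}\gamma_I^2\gamma_J\int w_I^2 w_J\,dx = W_1 + \tfrac13 W_2,
\]
where the diagonal $I=J$ contributes $W_1$ and the off-diagonal reassembles as $W_2/3$ by \eqref{e:W2}. Since \eqref{e:Wbig} fails, this gives $W_1 + \tfrac13 W_2 \simeq_\eta 0$.

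Third, to isolate $W_1$ I would separately establish $|W_2|\simeq_\eta 0$ by exploiting the off-diagonal decay of the pairings $\int w_I^2 w_J$ for $I \neq J$. The rapid decay \eqref{e:rapid}, the $C^2$ smoothness of $w$, and the mean-zero of $w$ (forced by $\{w_I\}$ being an $L^2$ ONB) deliver an almost-orthogonality estimate decaying in both the scale ratio $|I|/|J|$ and the relative position. Pairing this with Cauchy-Schwarz in the $J$-sum and the hypothesis $\sum_I \gamma_I^2\int_V w_I^2 \leq \eta|V|$ would give $|W_2|\leq C\eta^t|V|$, whence $|W_1| \leq |W_1+\tfrac13 W_2| + \tfrac13|W_2| \leq C\eta^{t'}|V|$.

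The hard part will be this third step. A naive Cauchy-Schwarz using $|\gamma_I|^2\leq |V|\int_V w_I^2$ breaks down because the resulting ambient sum $\sum_I w_I^2/|I|$ diverges at small scales; the mean-zero and $C^2$ smoothness of $w$ must be used carefully to kill the contributions of small $I$ deep inside $V$, while the rapid decay \eqref{e:rapid} handles the tails. This is a standard but delicate Littlewood-Paley bookkeeping, and is the substantive content of the lemma.
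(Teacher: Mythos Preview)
Your identity $\int_V (S\mathbf{1}_V)^2 = W_1 + \tfrac13 W_2$ is correct and elegant, but the reduction it effects is circular. Under the failure hypothesis the identity says precisely that $W_1 \simeq_\eta 0$ if and only if $W_2 \simeq_\eta 0$; you have traded the target for an equivalent one, not an easier one. Your proposed almost-orthogonality attack on $W_2$ does not escape this: for fixed $I$, summing $\gamma_J\int w_I^2 w_J$ over $J\neq I$ collapses exactly to $\int_V w_I^2 - \gamma_I\int w_I^3$, so any reorganisation of the $J$-sum simply reconstitutes $\int_V(S\mathbf{1}_V)^2 - W_1$. A Cauchy--Schwarz or Schur bound that forgets this exact cancellation gives at best $|W_2|\lesssim |V|$ (from $\sum_I\gamma_I^2=|V|$), with no power of $\eta$ appearing. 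The ``hypothesis $\sum_I\gamma_I^2\int_V w_I^2\le\eta|V|$'' you invoke is $\int_V(S\mathbf{1}_V)^2$ itself, and using it to bound $W_2$ is exactly the identity again.

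The paper bounds $W_1$ directly, never touching $W_2$. Using $\lVert w_I\rVert_\infty\lesssim|I|^{-1/2}$, one always has $|\gamma_I w_I|\lesssim 1$ pointwise, so $|\gamma_I|^3|w_I|^3\lesssim\gamma_I^2 w_I^2$. This already gives $|W_1|\lesssim|V|$, and the game is to insert a power of $\eta$. Split at the threshold $|\gamma_I|=\eta^{1/3}|I|^{1/2}$. For small coefficients, $|\gamma_I w_I|\lesssim\eta^{1/3}$ pointwise, so $|\gamma_I|^3|w_I|^3\lesssim\eta^{1/3}\gamma_I^2 w_I^2$ and integration gives $\lesssim\eta^{1/3}|V|$. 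For large coefficients, $|\gamma_I|\ge\eta^{1/3}|I|^{1/2}$ forces (via the rapid decay of $w$) that $V$ carries a fixed fraction $\gtrsim\eta^{2/3}$ of the $L^2$ mass of $w_I$, i.e.\ $\int_V w_I^2\gtrsim\eta^{2/3}$; hence $\sum_{\text{large}}\gamma_I^2\lesssim\eta^{-2/3}\int_V(S\mathbf{1}_V)^2\le\eta^{1/3}|V|$, and then $|\gamma_I|^3|w_I|^3\lesssim\gamma_I^2 w_I^2$ finishes as before. This threshold argument is the substantive content; your route never reaches it.
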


\begin{proof} 
 Set 
\begin{equation}
\mathcal D' = \bigl\{ I\in \mathcal D \colon 
\lvert \langle \mathbf{1}_V, w_{I} \rangle\rvert 
\geq \eta^{1/3} \sqrt{ \lvert I\rvert}\bigr\}. 
\end{equation}
For $I\in \mathcal D'$, we have $\lvert I\cap V \rvert \gtrsim \eta^{1/3} 
\lvert I \rvert $.  Thus, 
\begin{align}
\sum_{I\in \mathcal D'} \langle \mathbf{1}_V, w_{I} \rangle^2 
&\lesssim \eta^{-2/3}  
\int_V (S \mathbf{1}_V) ^2 \;dx \leq \eta^{1/3} \lvert V\rvert . 
\end{align}
It is then easy to see that the Lemma holds.  Indeed, on the one hand, for intervals in $\mathcal D'$, we have 
\begin{equation}
\int \sum_{I\in \mathcal D'} 
\lvert \langle \mathbf{1}_V, w_I \rangle  \rvert^3 w_I^3 \; dx 
 \lesssim \int \sum_I 
\langle \mathbf{1}_V, w_I \rangle ^2 w_I^2 \; dx \lesssim \eta ^{1/3}  \lvert V \rvert.
\end{equation}
And, on the other hand, for those not in $\mathcal D'$, 
\begin{align}
\int \sum_{I\not\in \mathcal D'} 
\lvert \langle \mathbf{1}_V, w_I \rangle  \rvert ^{3}w_I^3 \; dx 
 \lesssim \eta ^{1/3} \int \sum_I 
\langle \mathbf{1}_V, w_I \rangle ^2 w_I^2 \; dx \lesssim \eta ^{1/3}  \lvert V \rvert.
\end{align}

\end{proof}

To conclude the proof of the Theorem, combine the three Lemmas above.  For $0< p <1$, we have 
\begin{equation}
W_2 p^2 +W_3 p^3   \simeq_\eta  p \lvert V \rvert. 
\end{equation}Above, we have a third degree polynomial in $p$, which is approximately zero for $0<p<1$. That forces all the coefficients to be small.
This implies that $ \lvert V \rvert \simeq _\eta  0$, that is $\lvert V \rvert \leq C \eta ^t \lvert V \rvert$, 
for absolute choices of $C>0$ and $0<t<1$.   
We conclude  that $ \eta $ admits an absolute lower bound.

\section{History and Open Problems} 
\label{sub:the_haar_shift_applied_to_indicator_sets}

\Para 
The Hilbert transform $H$, appropriately normalized, satisfies $H^2 = -I$. 
This with the parallelogram inequality then shows that $\lVert f \pm H f\rVert_2^2  =  2\lVert f \rVert_2 ^2$.  
So, as long as $f\neq 0$,  $Hf$ cannot be close to $f$ or $-f$.  
Stein and Weiss \cite{MR0107163} showed that the Hilbert transform, applied to 
an indicator set, has distribution that only depends upon 
the measure of the set.  
This observation has been reexamined by several authors 
\cites{MR209918,MR2559057,MR2900476,MR2721783,MR2310545}. 
Laeng \cite{MR2900476} shows that the distribution of $ H \mathbf{1}_V$ 
both inside and outside of $V$ depends only on the measure of $V$.  
The `noncomputational' argument of Calder\'on \cite{MR209918}*{pg 434} can be modified to prove this. 
Os\c{e}kowski \cite{MR3233978} uses a stochastic analysis approach to extend 
this result to   Riesz transforms and functions taking values in the interval $[0,1]$. 

\Para 
In a different direction, Tolsa and Verdera \cite{MR2248827} address closely related questions 
in which the Cauchy transform is applied to measures on the plane.  
This subject then concerns so called reflectionless measures \cite{MR3829611}.  

\Para We do not know of any closely related results about discrete operators.  
Several questions arise.  
Square functions come in many different forms. Surely many of them satisfy estimates like those in this paper.  
Similarly, one could consider the square functions of this paper with, say, weights in a Muckenhoupt class. 

\Para Besides the many variants of one parameter square functions, 
one can consider tensor products of martingales.  To phrase a concrete question, 
let $\mathcal R = \mathcal D \times \mathcal D$ be the collection of dyadic rectangles in the plane.  Associate to $R= R_1 \times R_2 \in \mathcal R$, we have a Haar function 
\begin{equation}
h_{R_1 \times R_2} (x_1, x_2) = \prod _{j=1}^2 h _{R_j} (x_j) . 
\end{equation}
The square function is then defined in the obvious way, 
\begin{equation}
(S f)^2  = \sum_R \frac{ \langle f, h_R \rangle ^2 } {\lvert R\rvert} \mathbf{1}_R.  
\end{equation}

\smallskip 
\textbf{Question:} Does Theorem \ref{t:mart} hold for this square function?  

\smallskip

We state it this way so that one can avoid potential pitfalls associated with the 
general (nonhomogeneous) setting of a martingale (although these concerns are not present in the one parameter setting).
The most naive variants of the proofs of this paper do not seem to imply this result.  

\Para Switching perspectives, consider the Haar shift operator  given by $T  h _{I _{\pm}} =  \pm  \cdot h _{I _{\mp}}$.  
See Figure \ref{f:shift}. 
 The key properties are that $ T ^{\ast} = -T$ and $ T ^2 = - I$.   This choice is made to closely mimic properties of the Hilbert transform.
It follows that the the eigenvalues of $T$ are $\pm i$. 
And, one can then see that for any function $f$, 
$ \lVert Tf - f \rVert_2 = \lVert T f + f\rVert_2 $.   
Hence,    $ \langle T \mathbf{1}_V , \mathbf{1}_V  \rangle =0$.  
But notice that for a dyadic interval $I$,  $(T \mathbf 1_I)  \mathbf 1_I \equiv 0$.

\begin{figure}
\begin{tikzpicture}
\draw[<->]  (-0.6,0)  -- (2.6,0) ;   \draw (0,-.1) -- (0,.1);    \draw (2,-.1) -- (2,.1);
\draw (0,-.5) -- (.5,-.5) -- (.5,.5) --  (1,.5);  \draw[very thick] (1,-.5) -- (1.5,-.5) -- (1.5,.5) -- (2,.5);
\draw  (.5,-.9) node  {$ I _{-}$};  \draw  (1.5,-.9)  node   {$ I _{+}$};
\draw (1.5, -1) node (r0) {};  \draw (.5, -1.1) node (l0) {};
\draw[<->]  (-3.6,-2.5)  -- (-0.4,-2.5) ;   \draw (-3,-2.6) -- (-3,-2.4);    \draw  (-1,-2.6) -- (-1,-2.4);
 \draw[very thick] (-3,-3) -- (-2.5,-3) -- (-2.5,-2) -- (-2,-2) node (r2) {};
 \draw [->]  (r0) -- (r2) ;   \draw (-2.5, -1.6) node {$ T h _{I _{+}}$};
\draw[<->]  (2.4,-2.5)  -- (5.6,-2.5) ;   \draw (3,-2.6) -- (3,-2.4);    \draw  (5,-2.6) -- (5,-2.4);
 \draw  (4,-2) node (l2) {} -- (4.5,-2) -- (4.5,-3) -- (5,-3) ;
 \draw [->]  (l0) -- (l2) ;   \draw (4.5, -1.6) node {$ T h _{I _{-}}$};
\end{tikzpicture}
\caption{The Haar shift operator $ T$, for a dyadic interval $I$. At the top, a dyadic interval $ I$, with the graph of $ h _{I _{-}}$ and
$ h _{I _{+}}$. The later is graphed in a thick line. The image of the two functions under $ T$ appears at the bottom.}
\label{f:shift}
\end{figure}
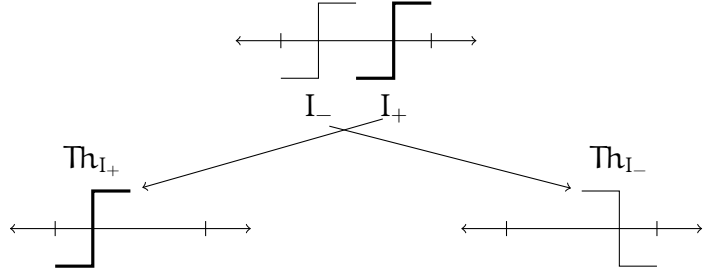

\smallskip 

\textbf{Question:} Is there an $\eta >0$ so that for all subsets $V\subset \mathbb R$ of finite measure, one has 
$ \int_V (T \mathbf 1_V)^2 \;dx < \eta \lvert V \rvert$?

\Para  Go to the two parameter situation, namely look at the tensor product 
$T \otimes T$, denoted $T_{[2]}$. Then, the eigenvalues 
are $\pm 1$, and this question appears to be far harder.  

\smallskip 

\textbf{Question:} Is there is a choice of $0< \eta <1$ so that for any subset $U\subset \mathbb R^2$ of finite measure, 
\begin{equation}
\int_U  \lvert T_{[2]} \mathbf1_U \rvert^2 \; dx \leq \eta  \lvert U \rvert  ? 
\end{equation}

\Para 
For a general orthonormal basis, one can phrase an associated square function. 
Notably in our approach, a higher moment than $2$ is used.  
Indeed, at a mininum the basis elements need to be `localized', 
and some super-orthogonality considerations appear to be necessary.

\begin{bibdiv}
    \begin{biblist}
    \bib{MR209918}{article}{
   author={Calder\'{o}n, A.-P.},
   title={Singular integrals},
   journal={Bull. Amer. Math. Soc.},
   volume={72},
   date={1966},
   pages={427--465},
   issn={0002-9904},
   review={\MR{209918}},
   doi={10.1090/S0002-9904-1966-11492-1},
}
\bib{MR2559057}{article}{
   author={Colzani, L.},
   author={Laeng, E.},
   author={Monz\'{o}n, L.},
   title={Variations on a theme of Boole and Stein-Weiss},
   journal={J. Math. Anal. Appl.},
   volume={363},
   date={2010},
   number={1},
   pages={225--229},
   issn={0022-247X},
   review={\MR{2559057}},
   doi={10.1016/j.jmaa.2009.08.017},
}
\bib{MR1162107}{book}{
   author={Daubechies, Ingrid},
   title={Ten lectures on wavelets},
   series={CBMS-NSF Regional Conference Series in Applied Mathematics},
   volume={61},
   publisher={Society for Industrial and Applied Mathematics (SIAM),
   Philadelphia, PA},
   date={1992},
   pages={xx+357},
   isbn={0-89871-274-2},
   review={\MR{1162107}},
   doi={10.1137/1.9781611970104},
}

\bib{MR3829611}{article}{
   author={Jaye, Benjamin},
   author={Nazarov, Fedor},
   title={Reflectionless measures for Calder\'{o}n-Zygmund operators I: general
   theory},
   journal={J. Anal. Math.},
   volume={135},
   date={2018},
   number={2},
   pages={599--638},
   issn={0021-7670},
   review={\MR{3829611}},
   doi={10.1007/s11854-018-0047-6},
}
\bib{MR2900476}{article}{
   author={Laeng, E.},
   title={On the $L^p$ norms of the Hilbert transform of a characteristic
   function},
   journal={J. Funct. Anal.},
   volume={262},
   date={2012},
   number={10},
   pages={4534--4539},
   issn={0022-1236},
   review={\MR{2900476}},
   doi={10.1016/j.jfa.2012.03.003},
}
\bib{MR2721783}{article}{
   author={Laeng, Enrico},
   title={A simple real-variable proof that the Hilbert transform is an
   $L^2$-isometry},
   language={English, with English and French summaries},
   journal={C. R. Math. Acad. Sci. Paris},
   volume={348},
   date={2010},
   number={17-18},
   pages={977--980},
   issn={1631-073X},
   review={\MR{2721783}},
   doi={10.1016/j.crma.2010.07.002},
}

\bib{MR2310545}{article}{
   author={Laeng, Enrico},
   title={Remarks on the Hilbert transform and on some families of
   multiplier operators related to it},
   journal={Collect. Math.},
   volume={58},
   date={2007},
   number={1},
   pages={25--44},
   issn={0010-0757},
   review={\MR{2310545}},
}

\bib{MR2436727}{article}{
   author={Melnikov, Mark},
   author={Poltoratski, Alexei},
   author={Volberg, Alexander},
   title={Uniqueness theorems for Cauchy integrals},
   journal={Publ. Mat.},
   volume={52},
   date={2008},
   number={2},
   pages={289--314},
   issn={0214-1493},
   review={\MR{2436727}},
}
\bib{MR3233978}{article}{
   author={Os\c{e}kowski, Adam},
   title={On the action of Riesz transforms on the class of bounded
   functions},
   journal={Complex Anal. Oper. Theory},
   volume={8},
   date={2014},
   number={6},
   pages={1269--1283},
   issn={1661-8254},
   review={\MR{3233978}},
   doi={10.1007/s11785-013-0323-5},
}

\bib{P}{article}{
	author={Pipher, Jill},
	title={Personal Communication},
	date={2022}, 
}

\bib{MR0107163}{article}{
   author={Stein, E. M.},
   author={Weiss, Guido},
   title={An extension of a theorem of Marcinkiewicz and some of its
   applications},
   journal={J. Math. Mech.},
   volume={8},
   date={1959},
   pages={263--284},
   review={\MR{0107163}},
   doi={10.1512/iumj.1959.8.58019},
}
\bib{MR2248827}{article}{
   author={Tolsa, Xavier},
   author={Verdera, Joan},
   title={May the Cauchy transform of a non-trivial finite measure vanish on
   the support of the measure?},
   journal={Ann. Acad. Sci. Fenn. Math.},
   volume={31},
   date={2006},
   number={2},
   pages={479--494},
   issn={1239-629X},
   review={\MR{2248827}},
}

\end{biblist}
\end{bibdiv}

\end{document}